\begin{document}
\newtheorem{definition}{Definition}[section]
\newtheorem{theorem}[definition]{Theorem}
\newtheorem{lemma}[definition]{Lemma}
\newtheorem{proposition}[definition]{Proposition}
\newtheorem{examples}[definition]{Examples}
\newtheorem{corollary}[definition]{Corollary}
\def\square{\Box}
\newtheorem{remark}[definition]{Remark}
\newtheorem{remarks}[definition]{Remarks}
\newtheorem{exercise}[definition]{Exercise}
\newtheorem{example}[definition]{Example}
\newtheorem{observation}[definition]{Observation}
\newtheorem{observations}[definition]{Observations}
\newtheorem{algorithm}[definition]{Algorithm}
\newtheorem{criterion}[definition]{Criterion}
\newtheorem{algcrit}[definition]{Algorithm and criterion}

\newenvironment{prf}[1]{\trivlist
\item[\hskip \labelsep{\it
#1.\hspace*{.3em}}]}{~\hspace{\fill}~$\square$\endtrivlist}
\newenvironment{proof}{\begin{prf}{Proof}}{\end{prf}}

\title{The unicity of real Picard--Vessiot fields}
\author{Teresa Crespo, Zbigniew Hajto and Marius van der Put }
\date{}
\maketitle

\begin{abstract} Using Deligne's work on Tannakian categories, the unicity
of real Picard-Vessiot fields for differential modules over a real differential field is derived. The inverse problem for real forms of a semi-simple
group is treated. Some examples illustrate the relations between differential modules, Picard--Vessiot fields and real forms of a group.
\let\thefootnote\relax\footnotetext{MSC2000: 34M50, 12D15, 11E10, 11R34. Keywords: differential Galois theory, real fields\\
T. Crespo and Z. Hajto acknowledge support of grant MTM2009-07024, Spanish Science Ministry.}     \end{abstract}

\section{Introduction}

$K$ denotes a real differential field with field of constants $k$. We suppose that $k\neq K$ and
that $k$ is a real closed field. Let $M$ denote a differential module over $K$ of dimension $d$,
represented by a matrix differential equation $y'=Ay$ where $A$ is a $d\times d$-matrix with entries
in $K$. A {\it Picard--Vessiot field} $L$ for $M/K$ is a field extension of $K$ such that:\\
(a) $L$ is equipped with a differentiation extending the one of $K$,\\
(b)  $M$ has a full space of solutions over $L$, i.e., there exists an invertible $d\times d$-matrix $F$ (called
a fundamental matrix) with entries in $L$ satisfying $F'=AF$,\\
(c) $L$ is (as a field) generated over $K$ by the entries of $F$,\\
(d) the field of constants of $L$ is again $k$.\\

A {\it real Picard--Vessiot field} $L$ for $M/K$ is a
Picard--Vessiot field which is also a real field. In \cite{CHS1}
and \cite{CHS2} the existence of a real Picard--Vessiot field is
proved using results of Kolchin.

The main result of this paper is:
\begin{theorem} Let $L_1,L_2$ denote two real Picard--Vessiot extensions for $M/K$. Suppose that $L_1$ and $L_2$
have total orderings which induce the same total ordering on $K$. Then there exists a $K$-linear isomorphism
$\phi : L_1\rightarrow L_2$ of differential fields.
\end{theorem}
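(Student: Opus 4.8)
The plan is to recast the statement in Tannakian terms and then to reduce the existence of $\phi$ to a real-algebraic fact about tensor products of ordered fields. Let $\{\{M\}\}$ be the Tannakian category of differential modules generated by $M$ over $K$; since $\mathrm{char}\,k=0$ and each $L_i$ has field of constants exactly $k$, the assignment $\omega_i(N)=(L_i\otimes_K N)^{\partial}$ defines a $k$-valued fibre functor, so $\{\{M\}\}$ is a neutral Tannakian category over $k$ with differential Galois group $G=\underline{\mathrm{Aut}}^{\otimes}(\omega_i)$, a linear algebraic group over $k$. By Deligne's representability theorem the functor $Z=\underline{\mathrm{Isom}}^{\otimes}(\omega_1,\omega_2)$ is represented by an affine $k$-scheme of finite type that is a torsor under $G$; concretely $Z=\mathrm{Spec}\,B$ with $B=(R_1\otimes_K R_2)^{\partial}$, where $R_i\subset L_i$ is the Picard--Vessiot ring. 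The first step is to record the standard dictionary: a $k$-rational point of $Z$, i.e. a $k$-algebra homomorphism $B\to k$, produces through the torsor isomorphism $R_1\otimes_K R_2\cong R_2\otimes_k B$ a differential $K$-algebra homomorphism $R_1\to R_2$, which is necessarily an isomorphism and extends to the desired $\phi:L_1\to L_2$. Hence it suffices to prove $Z(k)\neq\emptyset$.

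The engine for producing the $k$-point is the Artin--Lang homomorphism theorem: since $k$ is real closed and $B$ is a finitely generated $k$-algebra, it is enough to show that $B$ is formally real. This is where the orderings enter. Because $B\subseteq R_1\otimes_K R_2\subseteq L_1\otimes_K L_2$ (the last inclusion being injective as the tensor product over the field $K$ of the injections $R_i\hookrightarrow L_i$), and a subring of a formally real ring is again formally real, it suffices to show that $L_1\otimes_K L_2$ is formally real. The given orderings of $L_1$ and $L_2$ restrict to the same total ordering on $K$, so by the amalgamation property of ordered fields there is an ordered field $(\Omega,\le)$ together with order-preserving $K$-embeddings $\iota_i:L_i\hookrightarrow\Omega$. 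The multiplication map $L_1\otimes_K L_2\to\Omega$, $a\otimes b\mapsto \iota_1(a)\iota_2(b)$, then lands in a formally real field; were $-1$ a sum of squares in $L_1\otimes_K L_2$, its image would exhibit $-1$ as a sum of squares in $\Omega$, a contradiction. Thus $L_1\otimes_K L_2$, and therefore $B$, is formally real, and Artin--Lang supplies the required $k$-point of $Z$.

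Two points require care, and I expect them to be the crux. First is the Tannakian bookkeeping: one must verify that $\omega_i$ really is a $k$-valued fibre functor (using $\mathrm{constants}(L_i)=k$), that $Z$ is of finite type over $k$, and that the torsor structure $R_1\otimes_K R_2\cong R_2\otimes_k B$ holds, since it is precisely Deligne's theory that furnishes the $G$-torsor and replaces the classical uniqueness argument available only over algebraically closed constants. Second, and most delicate, is the real-algebraic input: the compatibility of the two orderings on $K$ is exactly what makes the amalgamation step succeed, and it is the hypothesis that cannot be dropped, since for incompatible orderings $L_1\otimes_K L_2$ may fail to be formally real and genuinely non-isomorphic real Picard--Vessiot fields occur. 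I would therefore isolate the amalgamation and formal-reality statement as a separate lemma, and treat the passage from a $k$-point of the Isom-torsor to the differential isomorphism $\phi$ as routine once the identification $R_1\otimes_K R_2\cong R_2\otimes_k B$ is in place.
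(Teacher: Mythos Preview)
Your argument is correct and takes a genuinely different route from the paper's. The paper first passes to the real closure $K^r$ of $K$ with respect to the common ordering (Lemma~2.1), then establishes an auxiliary criterion (Theorem~1.5): over a real closed base a Picard--Vessiot field $L$ is real if and only if the torsor $\underline{\mathrm{Isom}}^\otimes_K(K\otimes\omega_L,\rho)$ comparing $\omega_L$ to the \emph{forgetful} functor $\rho$ is trivial; this in turn rests on a separate lemma (Lemma~2.2) that $F[G]$ is a real algebra for connected $G$ over a real closed field $F$. With both torsors $\underline{\mathrm{Isom}}^\otimes_K(K\otimes\omega_j,\rho)$ trivial one gets $K\otimes\omega_1\cong K\otimes\omega_2$, and a Galois-cohomological specialisation argument (choosing a finitely generated $k$-subalgebra of $K$ and applying the real Nullstellensatz there) descends triviality from $H^1(\{1,\sigma\},G(K(i)))$ to $H^1(\{1,\sigma\},G(k(i)))$. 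You bypass all of this by working directly with the $k$-torsor $Z=\underline{\mathrm{Isom}}^\otimes(\omega_1,\omega_2)$, realising its coordinate ring $B$ inside $R_1\otimes_K R_2\subset L_1\otimes_K L_2$, and producing a homomorphism from $B$ to an ordered field $\Omega$ via the amalgamation property of ordered fields over $(K,\tau)$; Artin--Lang then furnishes the $k$-point. Your route is shorter and makes the role of the common ordering on $K$ transparent, whereas the paper's detour isolates Theorem~1.5 and Lemma~2.2 as results of independent interest and casts the final step in Galois-cohomological language. One small tightening: what your amalgamation step actually delivers is a $k$-algebra homomorphism $B\to\Omega$ into an ordered (hence into a real closed) field, and that is exactly the hypothesis Artin--Lang requires; you need not argue that $L_1\otimes_K L_2$ itself is a domain or ``real'' in the stronger sense.
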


\begin{remarks} {\rm
Suppose that $\phi$ exists.  Choose a total ordering of $L_1$ and define the total ordering of $L_2$ to be induced
by $\phi$. Then $L_1$ and $L_2$ induce the same total ordering on $K$. Therefore the condition of the Theorem 1.1
is necessary.

If $K$ happens to be real closed, then the assumption in the theorem is superfluous since $K$ has a unique total ordering. On the other hand, consider the example $K=k(z)$ with differentiation $'=\frac{d}{dz}$ and the equation
$y'=\frac{1}{2z}y$. Let $L_1=K(t_1)$ with $t_1^2=z$ and $L_2=K(t_2)$ with $t_2^2=-z$. Both fields are real
Picard--Vessiot fields for this equation. They are not isomorphic as differential field extensions of $K$, since
$z$ is positive for any total ordering of $L_1$ and $z$ is negative for any total ordering of $L_2$.}
\hfill $\square$ \\ \end{remarks}

The proof of Theorem 1.1 uses Tannakian categories as presented in
\cite{D-M} and  P.~Deligne's fundamental paper \cite{De}. We adopt
much of the notation of \cite{De}. Let $<M>_\otimes $ denote the
Tannakian category generated by the differential module $M$. The
forgetful functor $\rho : <M>_\otimes \rightarrow vect(K)$
associates to any differential module $N\in <M>_\otimes $ the
finite dimensional $K$-vector space $N$. Let $\omega : <M>_\otimes
\rightarrow vect(k)$
be a fibre functor with values in the category $vect(k)$ of the finite dimensional vector spaces over $k$.\\

 {\it Now we recall some results of}  \cite{De}, \S 9. The functor $\underline{Aut}^\otimes(\omega)$ is represented by a linear
 algebraic group $G$ over $k$.  By Proposition 9.3, the functor $\underline{Isom}^\otimes_K(K\otimes \omega ,\rho)$ is represented by a torsor $P$ over $G_K:=K\times _kG$.   This torsor is affine, irreducible and its coordinate ring $O(P)$ has a natural differentiation extending the differentiation of $K$. Moreover, the field of fractions $K(P)$ of $O(P)$ is
 a Picard--Vessiot field for $M/K$.

 On the other hand, let $L$ be a Picard--Vessiot field for $M/K$. Define the fibre functor
 $\omega _L: <M>_\otimes \rightarrow vect(k)$ by $\omega _L(N)=\ker (\partial : L\otimes _KN\rightarrow L\otimes_K N)$.
 Then $\omega _L$ produces a Picard--Vessiot field $L'$ which is isomorphic to $L$ as differential field extension
  of $K$. The conclusion is:\\

 \begin{proposition}[{\rm \cite{De},\S 9}]
 The above constructions yield a bijection between the (isomorphy classes of) fibre functors
 $\omega : <M>_\otimes \rightarrow vect(k)$ and the (isomorphy classes of) Picard--Vessiot fields $L$ for $M/K$.
  \end{proposition}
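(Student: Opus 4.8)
The plan is to show that the two constructions recalled above are mutually inverse on isomorphism classes, so I must first record that each is well defined on classes and then check both composites. Well-definedness is formal: an isomorphism of fibre functors $\omega\cong\omega'$ induces an isomorphism of the representing torsors $\underline{Isom}^\otimes_K(K\otimes\omega,\rho)\cong\underline{Isom}^\otimes_K(K\otimes\omega',\rho)$, hence a differential $K$-isomorphism of their fraction fields; conversely a differential $K$-isomorphism $L\cong L'$ transports $\ker\partial$ and thereby identifies $\omega_L$ with $\omega_{L'}$. Write $A$ for the map $\omega\mapsto K(P)$ with $P=\underline{Isom}^\otimes_K(K\otimes\omega,\rho)$, and $B$ for the map $L\mapsto\omega_L$.

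For $B\circ A=\mathrm{id}$ I would start from $\omega$ and show $\omega_{K(P)}\cong\omega$. The torsor $P$ carries a tautological tensor-isomorphism; viewing its universal point over $K(P)$ gives, for every $N\in<M>_\otimes$, an isomorphism $\xi_N:K(P)\otimes_k\omega(N)\xrightarrow{\sim}K(P)\otimes_K N$, natural in $N$ and compatible with $\otimes$. The differentiation on $O(P)$ is defined precisely so that this universal isomorphism is horizontal, i.e. $\xi_N$ carries the constant subspace $\omega(N)$ into $\ker(\partial:K(P)\otimes_K N\to K(P)\otimes_K N)=\omega_{K(P)}(N)$. Since $K(P)$ is a Picard--Vessiot field, $M$, and hence every object of $<M>_\otimes$, has a full space of solutions over $K(P)$, so $\dim_k\omega_{K(P)}(N)=\dim_k\omega(N)$; the $k$-linear injection $\xi_N|_{\omega(N)}$ is therefore an isomorphism $\omega(N)\xrightarrow{\sim}\omega_{K(P)}(N)$, natural and tensor-compatible. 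This yields $\omega\cong\omega_{K(P)}$.

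For $A\circ B=\mathrm{id}$ I would start from a Picard--Vessiot field $L$, form $\omega_L$ and $P'=\underline{Isom}^\otimes_K(K\otimes\omega_L,\rho)$, and produce a canonical isomorphism $K(P')\cong L$. The defining inclusions $\omega_L(N)\subseteq L\otimes_K N$ extend $L$-linearly to a tensor-isomorphism $L\otimes_k\omega_L(N)\xrightarrow{\sim}L\otimes_K N$ by full solvability over $L$, i.e. an $L$-valued point of $P'$, equivalently a $K$-algebra homomorphism $\alpha:O(P')\to L$. Because $\omega_L(N)\subseteq\ker\partial$ by definition, this point is horizontal, so $\alpha$ is a differential homomorphism; as $O(P')$ is a simple differential ring its kernel vanishes and $\alpha$ is injective, extending to an embedding $K(P')\hookrightarrow L$ of differential fields over $K$. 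Finally, applying $\alpha$ to the universal fundamental matrix of $M$ in $O(P')$ produces a fundamental matrix of $M$ inside $\alpha(O(P'))$; since $L$ is generated over $K$ by the entries of such a matrix, $\alpha(K(P'))=L$ and the embedding is onto. Hence $K(P')\cong L$, which together with the previous paragraph proves the stated bijection.

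I expect the main obstacle to be the bookkeeping of differentiations---verifying that the differentiation on $O(P)$ is exactly the one making the universal point horizontal, and matching $\xi_N(\omega(N))$ with $\ker\partial$ on the nose---together with the need to keep every identification natural in $N$ and compatible with the tensor structure, so that the resulting maps are genuine isomorphisms of fibre functors. A second delicate point, invisible when $k$ is algebraically closed, is that over a merely real closed $k$ one cannot appeal to any uniqueness of Picard--Vessiot fields; the surjectivity of $\alpha$ must instead be extracted intrinsically from the Picard--Vessiot generation property, and the whole argument must respect the various $k$-forms of $G$ that distinct fibre functors can produce.
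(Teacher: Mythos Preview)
The paper does not give a proof of this proposition: it is stated as a recall of results from \cite{De}, \S 9, with only the brief informal sketch in the paragraph preceding the statement (the construction $\omega\mapsto K(P)$ and the remark that $\omega_L$ ``produces a Picard--Vessiot field $L'$ which is isomorphic to $L$''). Your argument is precisely the natural fleshing-out of that sketch into a genuine proof that the two constructions are mutually inverse, and it is essentially correct; in particular your use of the universal (horizontal) point of $P$ to identify $\omega$ with $\omega_{K(P)}$, and of the $L$-valued point of $P'$ together with simplicity of $O(P')$ to obtain the differential embedding $K(P')\hookrightarrow L$, are the right mechanisms. The only points where you are taking something on faith are exactly the ones you flag yourself: that Deligne's differentiation on $O(P)$ is characterised by horizontality of the universal point, and that $O(P')$ is a simple differential ring; both are standard and are what the paper is implicitly importing from \cite{De} and \cite{PS}.
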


The following result will also be useful.

\begin{proposition}[{\rm \cite{D-M}, Thm. 3.2}]  Let $\omega :<M>_\otimes \rightarrow vect(k)$ be a fibre functor and
$G=\underline{Aut}^\otimes_k(\omega)$.\\
\noindent {\rm (a)}. For any field $F\supset k$ and any fibre functor $\eta :  <M>_\otimes \rightarrow vect(F)$,
 the functor $\underline{Isom}^\otimes _F(F\otimes \omega ,\eta)$ is representable by a torsor over
$G_F=F\times _kG$.\\
{\rm (b)}. The map $\eta\mapsto  \underline{Isom}^\otimes _F(F\otimes \omega ,\eta)$ is a bijection between
the (isomorphy classes of) fibre functors  $\eta :  <M>_\otimes \rightarrow vect(F)$ and the (isomorphy classes of)
$G_F$-torsors.

 \end{proposition}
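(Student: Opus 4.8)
The plan is to transport the whole situation across the fundamental Tannakian equivalence and then argue by faithfully flat descent. Recall that the representability of $G=\underline{Aut}^\otimes_k(\omega)$ quoted above is part of the stronger fact that $\omega$ defines an equivalence of tensor categories $<M>_\otimes \to \mathrm{Rep}_k(G)$ onto the finite dimensional representations of $G$; under this equivalence $\omega$ becomes the forgetful functor and $F\otimes\omega$ becomes the functor $V\mapsto V\otimes_k F$. Hence I may assume $<M>_\otimes=\mathrm{Rep}_k(G)$, and a fibre functor $\eta$ with values in $vect(F)$ is simply an exact, faithful, $F$-linear tensor functor $\mathrm{Rep}_k(G)\to vect(F)$.

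\emph{Representability.} Write $A=O(G)$, a commutative Hopf algebra over $k$, so that $\mathrm{Rep}_k(G)$ is the category of finite dimensional $A$-comodules. For an $F$-algebra $R$, a point of $\underline{Isom}^\otimes_F(F\otimes\omega,\eta)(R)$ is a family of $R$-linear isomorphisms $\lambda_N:\omega(N)\otimes_k R\to \eta(N)\otimes_F R$, natural in $N$ and compatible with tensor products and the unit object. Naturality together with the tensor condition forces $\lambda$ to be determined by its value on a faithful generating object and its tensor constructions, and writing the compatibilities as polynomial relations produces an $F$-algebra $B$ representing the functor. The same computation shows $\underline{Aut}^\otimes_F(F\otimes\omega)$ is represented by $A_F=F\otimes_k A$, that is by $G_F$, and $P:=\mathrm{Spec}(B)$ carries a right $G_F$-action by precomposition.

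\emph{Torsor structure.} It remains to check that the action map $P\times_F G_F\to P\times_F P$, $(p,g)\mapsto(p,p\circ g)$, is an isomorphism and that $P$ is faithfully flat over $F$. The first is formal: for two isomorphisms $\lambda,\mu$ with the same source, $\mu^{-1}\circ\lambda$ is the unique automorphism of $F\otimes\omega$ taking one to the other. The substantive point is faithful flatness, equivalently that $P$ acquires a point after some faithfully flat extension $F\to R$, i.e.\ that $\eta$ and $F\otimes\omega$ become isomorphic tensor functors fpqc-locally on $\mathrm{Spec}(F)$. \emph{This local triviality is the main obstacle.} I expect to prove it by taking $R=B$ and exhibiting the tautological isomorphism, after showing directly that $B$ is nonzero and faithfully flat over $F$: realizing $B$ as a filtered union of the coordinate rings of the (nonempty) schemes of isomorphisms between the finite pieces $\omega(N)\otimes R$ and $\eta(N)\otimes R$, one uses the exactness and faithfulness of $\omega$ and $\eta$ to keep the defining relations nondegenerate and the transition maps flat, so that faithful flatness survives in the limit. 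Everything except this flatness is formal.

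\emph{Bijection.} For part (b) I construct an inverse to $\eta\mapsto P$. Given a $G_F$-torsor $T=\mathrm{Spec}(C)$, twist the standard functor: set $\eta_T(N)=(C\otimes_F(\omega(N)\otimes_k F))^{G_F}$, the contracted product of $T$ with $F\otimes\omega$ along the $G_F$-action coming from the representation $N$. Faithful flatness of $C$ over $F$ guarantees, by descent, that $\eta_T$ is again an exact faithful $F$-linear tensor functor, hence a fibre functor. A direct verification gives $\eta_P\cong\eta$ and $\underline{Isom}^\otimes_F(F\otimes\omega,\eta_T)\cong T$ compatibly with the $G_F$-actions, so the two assignments are mutually inverse and (b) follows.
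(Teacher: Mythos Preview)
The paper does not give a proof of this proposition at all: it is simply quoted from \cite{D-M}, Theorem~3.2, and used as a black box. So there is no paper proof to compare against.

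That said, your sketch is precisely the standard route taken in \cite{D-M} (and in \cite{De}): transport via $\omega$ to $\mathrm{Rep}_k(G)$, represent the Isom-functor directly, and then argue that the resulting affine scheme is a torsor under $G_F$. You correctly isolate the one nontrivial point, namely that $B=O(P)$ is faithfully flat over $F$; everything else is formal. Your proposed strategy for this step---writing $B$ as a filtered colimit of coordinate rings of finite Isom-schemes and checking nondegeneracy and flatness at each finite stage---is the right shape but is not yet a proof: the delicate issue is that the transition maps between these finite pieces are not obviously flat, and one needs additional input (in \cite{D-M} this comes from the internal structure of the category, ultimately the rigidity and the existence of a tensor generator; in \cite{De} a cleaner argument is given). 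So your outline is correct but the flagged step remains a genuine gap that requires the actual work done in \cite{D-M} or \cite{De}; since the paper itself only cites the result, this is entirely appropriate as a summary of where the content lies.
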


  The main ingredient in the proof of Theorem 1.1, given in \S 2, is:

\begin{theorem} Suppose that $K$ is real closed. Let $L$ be a Picard--Vessiot field for $M/K$. Then $L$ is a real
field if and only if the torsor $\underline{Isom}_K^\otimes (K\otimes \omega _L,\rho)$ is trivial.
\end{theorem}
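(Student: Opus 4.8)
The plan is to reduce the reality of $L$ to the existence of a rational point on the torsor, and then to a triviality statement. First I would specialize Proposition 1.4(a) to the fibre functor $\omega_L$ itself: with $G=\underline{Aut}^\otimes_k(\omega_L)$, the functor $P:=\underline{Isom}^\otimes_K(K\otimes \omega_L,\rho)$ is represented by a torsor over $G_K$. The construction recalled from \cite{De},\S 9 identifies the Picard--Vessiot field produced by $\omega_L$ with the field of fractions $K(P)$ of the (irreducible) coordinate ring $O(P)$; since $\omega_L$ reproduces $L$ up to $K$-linear differential isomorphism, I would identify $L\cong K(P)$ as differential field extensions of $K$. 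As reality is a purely field-theoretic property, this reduces the theorem to showing that $K(P)$ is formally real if and only if $P$ is a trivial torsor.

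Next I would record two structural facts about $P$. Because the base field has characteristic zero, $G_K$ is a smooth algebraic group, and any torsor under a smooth group is an fppf-form of that group and hence itself smooth; thus $P$ is a smooth, affine, irreducible $K$-variety with function field $K(P)$. The second fact is the standard torsor principle: $P$ is trivial, i.e.\ isomorphic to $G_K$ as a $G_K$-torsor, if and only if $P(K)\neq\emptyset$, a rational point $p$ yielding the trivialization $g\mapsto p\cdot g$ and the identity giving the converse. Crucially, neither of these facts uses connectedness of $G$, only smoothness.

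The decisive step is the real-algebraic equivalence
\[
K(P)\ \text{formally real}\quad\Longleftrightarrow\quad P(K)\neq\emptyset,
\]
valid because $K$ is real closed and $P$ is smooth. The forward implication is the Artin--Lang homomorphism theorem: a finitely generated domain over a real closed field whose fraction field is formally real admits a $K$-algebra homomorphism to $K$, i.e.\ $\mathrm{Spec}\,O(P)$ acquires a $K$-rational point. For the converse I would use smoothness: at a $K$-point the local ring is regular, its completion is $K[[t_1,\dots,t_n]]$, and $K(P)$ embeds into the fraction field of this power-series ring, which is formally real since iterated formal Laurent series over a real closed field carry a lexicographic ordering. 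Chaining the equivalences,
\[
L\ \text{real}\ \Longleftrightarrow\ K(P)\ \text{formally real}\ \Longleftrightarrow\ P(K)\neq\emptyset\ \Longleftrightarrow\ P\ \text{trivial},
\]
which is the assertion of the theorem.

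The main obstacle is marshalling the real-algebraic input so that it truly applies. For Artin--Lang one needs only that $O(P)$ be a finitely generated domain over $K$, which is guaranteed by the irreducibility statement recalled above. The subtler point is the converse: a singular $K$-point need not make the function field formally real (for instance $K[x,y]/(x^2+y^2)$ has the single real point $(0,0)$ yet a non-real fraction field), so it is essential that smoothness of the torsor upgrades an arbitrary $K$-point to a nonsingular one, thereby licensing the power-series embedding. Verifying that this smoothness is available regardless of the structure of $G$ is the technical heart of the argument.
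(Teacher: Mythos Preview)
Your argument is correct, and the forward implication ($L$ real $\Rightarrow$ torsor trivial via Artin--Lang / the real Nullstellensatz) coincides with the paper's. The converse, however, is handled differently. The paper first observes that $G_{k(i)}$, being the differential Galois group of $K(i)\otimes M$ over the algebraically closed field $K(i)$, is connected; triviality of the torsor then gives $R\cong K[G]$, and a separate Lemma~2.2 shows that for connected $G$ the coordinate ring $K[G]$ is a real algebra. That lemma is proved by a transcendental argument: over $\mathbb{R}$ one composes with the exponential map $\mathrm{Lie}(G)(\mathbb{R})\to G(\mathbb{R})$ and uses analytic continuation to $\mathbb{C}$, and then reduces the general real closed case to finitely generated subfields embeddable in $\mathbb{R}$. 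Your route bypasses this entirely: smoothness of the torsor lets you embed $K(P)$, via the completed local ring at a $K$-point, into an iterated Laurent series field over $K$, which is orderable. This is more self-contained and, as you note, does not need connectedness of $G$ for that direction (though connectedness is still implicitly behind the irreducibility of $P$ that you invoke for Artin--Lang). What the paper's approach buys in exchange is the stronger intermediate statement that $K[G]$ itself---not just its fraction field---is a real algebra, a fact of independent interest with the Zariski-density corollary recorded right after it.
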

\section{The proof of Theorem 1.1}
\subsection{Reduction to $K$ is a real closed differential field}
For notational convenience, the differential module $M/K$ is represented by a scalar homogeneous
linear differential equation $\mathcal{L}(y):=y^{(d)}+a_{d-1}y^{(d-1)}+\cdots +a_1y^{(1)}+a_0y=0$.
A Picard--Vessiot field $L$ for $\mathcal{L}$ has the properties:\\
$k$ is the field of constants of $L$, the {\it solution space}
$V=\{v\in L|\ \mathcal{L}(v)=0\}$ is a $k$-linear space of dimension $d$ and $L$ is generated over the field $K$ by $V$ and all the derivatives of the elements in $V$. One writes $L=K<V>$ for this last property.

\begin{lemma} Let $L_1,L_2$ be two real Picard--Vessiot fields for $M$ over $K$. Suppose that
 $L_1$ and $L_2$ have total orderings extending a total ordering $\tau$ on $K$. Let $K^r\supset K$ be the real closure of $K$ inducing the total ordering $\tau$. Then:\\

 The fields $L_1,L_2$ induce Picard--Vessiot fields
 $\tilde{L}_1,\tilde{L}_2$ for $K^r\otimes M$ over $K^r$. These fields
 are isomorphic as differential field extensions of $K^r$ if and only if
 $L_1$ and $L_2$ are isomorphic as differential field extensions of $K$.
\end{lemma}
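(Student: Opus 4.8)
The plan is to realize $\tilde L_1,\tilde L_2$ inside real closures, verify the Picard--Vessiot axioms over $K^r$, and then read off the two implications by restricting, respectively extending, isomorphisms across the inclusions $L_i\subseteq\tilde L_i$. To construct $\tilde L_i$, fix the given ordering of $L_i$ and let $L_i^r$ be a real closure of $L_i$ for it; since $L_i^r/L_i$ is algebraic and we are in characteristic zero, the differentiation of $L_i$ extends uniquely to $L_i^r$. The relative algebraic closure of $K$ in $L_i^r$ is a real closed algebraic extension of $K$ inducing $\tau$, hence a real closure of $(K,\tau)$; by uniqueness of real closures I identify it with $K^r$, so that $K\subseteq K^r\subseteq L_i^r$ and $K\subseteq L_i\subseteq L_i^r$. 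I set $\tilde L_i:=K^r\cdot L_i\subseteq L_i^r$, a differential subfield whose differentiation extends those of $K^r$ and of $L_i$. Since $L_i=K\langle V_i\rangle$, we get $\tilde L_i=K^r\langle V_i\rangle$, so $\tilde L_i$ is generated over $K^r$ by the $d$-dimensional solution space $V_i$ of $\mathcal{L}$.

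The only nonformal point in the construction is that the constants stay equal to $k$, and I would obtain this from the ambient field $L_i^r$: if $c\in L_i^r$ is a constant, then $c$ is algebraic over $L_i$, and differentiating its minimal polynomial over $L_i$ forces, by minimality, all coefficients to be constants of $L_i$, hence to lie in $k$; thus $c$ is real and algebraic over the real closed field $k$, so $c\in k$. Therefore the constants of $L_i^r$, and a fortiori those of $\tilde L_i$, equal $k$, and $\tilde L_i$ is a real Picard--Vessiot field for $K^r\otimes M$ over $K^r$ whose full solution space is exactly $V_i$.

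For the implication needed in Theorem 1.1, suppose $\Phi\colon\tilde L_1\to\tilde L_2$ is a $K^r$-linear differential isomorphism. As $\Phi$ fixes $K^r\supseteq K$ and commutes with $\partial$ it sends solutions of $\mathcal{L}$ to solutions, and being bijective it maps the $d$-dimensional space $V_1$ onto the full solution space of $\tilde L_2$, which is $V_2$; hence $\Phi(L_1)=\Phi(K\langle V_1\rangle)=K\langle V_2\rangle=L_2$ and $\Phi|_{L_1}$ is the required $K$-linear differential isomorphism. For the converse, given a $K$-linear differential isomorphism $\psi\colon L_1\to L_2$, I transport the ordering of $L_1$ along $\psi$ to an ordering of $L_2$ extending $\tau$; then $\psi$ is order preserving, extends to the real closures, fixes the common real closure $K^r$ of $K$, and thus extends to a $K^r$-isomorphism of $\tilde L_1$ onto the field $\tilde L_2'$ obtained from $L_2$ with this transported ordering.

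The main obstacle is exactly this last step: identifying $\tilde L_2'$ with $\tilde L_2$, i.e. showing the construction is independent of the ordering of $L_2$ extending $\tau$. Two such orderings embed the relative algebraic closure of $K$ in $L_2$ into $K^r$ differently, and there is no nontrivial $K$-automorphism of the real closed field $K^r$ to reconcile them, so this cannot be arranged by a naive direct map. I would instead argue that $\tilde L_2$ and $\tilde L_2'$ are both real Picard--Vessiot fields over the real closed field $K^r$, so that by Theorem 1.5 their torsors $\underline{Isom}^\otimes_{K^r}(K^r\otimes\omega_{\tilde L_2},\rho)$ and $\underline{Isom}^\otimes_{K^r}(K^r\otimes\omega_{\tilde L_2'},\rho)$ are trivial; Propositions 1.3 and 1.4 then give that the corresponding fibre functors are isomorphic and hence $\tilde L_2\cong_{K^r}\tilde L_2'$. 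Only the first implication is used in the proof of Theorem 1.1; the converse is recorded for completeness.
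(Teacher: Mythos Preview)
Your construction of $\tilde L_i$ inside $L_i^r$, the verification that the constants of $L_i^r$ (hence of $\tilde L_i$) are $k$, and your proof of the forward implication ($\tilde L_1\cong_{K^r}\tilde L_2\Rightarrow L_1\cong_K L_2$) are correct and follow the paper's line, with the added care of checking the constant field explicitly. This is the direction actually used in the proof of Theorem~1.1, and here your argument and the paper's coincide.

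For the converse, you have put your finger on a point the paper treats very briefly: the paper simply asserts that a $K$-isomorphism $\phi:L_1\to L_2$ extends to $\tilde\phi:L_1^r\to L_2^r$ and hence carries $\tilde L_1$ to $\tilde L_2$, without addressing that $L_2^r$ was built from a fixed ordering $\tau_2$ which need not equal $\phi_*(\tau_1)$. Your concern is legitimate. However, your proposed resolution has a gap. From Theorem~1.5 you correctly get that the torsors $\underline{Isom}^\otimes_{K^r}(K^r\otimes\omega_{\tilde L_2},\rho)$ and $\underline{Isom}^\otimes_{K^r}(K^r\otimes\omega_{\tilde L_2'},\rho)$ are trivial, hence $K^r\otimes\omega_{\tilde L_2}\cong K^r\otimes\omega_{\tilde L_2'}$ as $K^r$-valued fibre functors. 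But Propositions~1.3 and~1.4 by themselves do not let you descend this to an isomorphism $\omega_{\tilde L_2}\cong\omega_{\tilde L_2'}$ of $k$-valued fibre functors: what you need is precisely the injectivity of $H^1(\{1,\sigma\},G(k(i)))\to H^1(\{1,\sigma\},G(K^r(i)))$, which is the content of the argument in \S2.3 (using the real Nullstellensatz to specialise a trivialisation over $K^r$ down to $k$). So your fix works, but only after importing the ``final step'' rather than just the two propositions you cite. There is no logical circularity, since \S2.3 only relies on the forward direction of the lemma, and you rightly note that only the forward direction feeds into Theorem~1.1.
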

\begin{proof} Let, for $j=1,2$, $\tau_j$ be a total ordering on $L_j$ inducing $\tau$ on $K$ and
let $L_j^r$ be the real closure of $L_j$ which induces the ordering $\tau_j$. The algebraic closure
$K_j$ of $K$ in $L_j^r$ is real closed. Since $\tau_j$ induces $\tau$, there exists a $K$-linear
isomorphism $\phi _j:K^r\rightarrow K_j$. This isomorphism is unique since the only $K$-linear
automorphism of $K^r$ is the identity. We will identify $K_j$ with $K^r$.

 Let $V_j\subset L_j$ denote the solution space of $M$. Then, for $j=1,2$, the field
 $\tilde{L}_j:=K^r<V_j>\subset L_j^r$ is a real Picard--Vessiot field for $K^r\otimes M$.

   Assume the existence of
 a $K^r$-linear differential isomorphism\linebreak $\psi :K^r<V_1>\rightarrow K^r<V_2>$.
 Clearly $\psi (V_1)=V_2$ and $\psi$ induces therefore a $K$-linear differential isomorphism
 $L_1=K<V_1>\rightarrow L_2=K<V_2>$.

 On the other hand, an isomorphism $\phi : L_1\rightarrow L_2$ (of differential field extensions of $K$) extends
 to an isomorphism $\tilde{\phi}: L_1^r\rightarrow L_2^r$. Clearly $\tilde{\phi}$ maps $\tilde{L}_1$ to $\tilde{L}_2$.
 \end{proof}

\subsection{Real algebras and connected linear groups}

 An algebra $R$ (commutative with 1 and without zero divisors) is called {\it real} if $x_1,\dots ,x_n\in R$ and
 $\sum _{j=1}^nx_j^2=0$ implies $x_1=\cdots =x_n=0$. By lack of a reference we give a proof of the following
 statement.

\begin{lemma} Let $F$ be a real closed field and let $G$ be a linear algebraic group over $F$ such that
$G_{F(i)}=F(i)\times _FG$ is connected. Then the coordinate ring $F[G]$ of $G$ over $F$ is a real algebra.
\end{lemma}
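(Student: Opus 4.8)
The plan is to reduce the statement about the coordinate ring $F[G]$ to the formal reality of the function field $F(G)$, and then to produce an ordering of $F(G)$ using the identity element of $G$ as a distinguished rational point. Throughout I use that a real closed field $F$ has characteristic $0$ and that $F(i)$ is its algebraic closure.

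First I would record the integrality. Since $\mathrm{char}\,F=0$, the group $G$ is smooth over $F$ (Cartier), and as $F(i)=\overline{F}$ is algebraically closed, the hypothesis that $G_{F(i)}$ is connected means that $G_{\overline F}$ is an irreducible variety, hence $F(i)[G]=F(i)\otimes_F F[G]$ is a domain. As $F[G]$ is a subring of $F(i)[G]$, it is a domain as well, and $G$ is geometrically integral; write $F(G)=\operatorname{Frac}(F[G])$ for its function field. Next I would reduce the conclusion to the function field: a domain $A$ is \emph{real} in the sense of the definition above if and only if $\operatorname{Frac}(A)$ is formally real, since clearing denominators turns a relation $\sum_j (a_j/b)^2=0$ in $\operatorname{Frac}(A)$ into a relation $\sum_j a_j^2=0$ in $A$, and conversely every relation in $A$ is one in $\operatorname{Frac}(A)$. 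Thus it suffices to prove that $F(G)$ is formally real.

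The key geometric input is the identity $e\in G(F)$, which is a smooth $F$-rational point with residue field $F$. I would pass to the completion of the regular local ring $\mathcal{O}_{G,e}$: by the Cohen structure theorem, since the residue field $F$ is a coefficient field in characteristic $0$, one has $\widehat{\mathcal{O}}_{G,e}\cong F[[t_1,\dots,t_m]]$ with $m=\dim G$. Completion of a Noetherian local domain is injective, so $F(G)=\operatorname{Frac}(\mathcal{O}_{G,e})$ embeds into $\operatorname{Frac}\bigl(F[[t_1,\dots,t_m]]\bigr)$, and this in turn embeds into the iterated Laurent series field $F((t_1))\cdots((t_m))$, using $F[[t_1,\dots,t_m]]=F[[t_1]]\cdots[[t_m]]$. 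Finally, the iterated Laurent series field over a real closed field is formally real: on $F((t))$ one orders a nonzero series by the sign of its lowest nonvanishing coefficient in $F$, and one iterates this construction. Since a subfield of an ordered field is formally real, $F(G)$ is formally real, and hence $F[G]$ is a real algebra.

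I expect the main obstacle to be the two technical identifications that form the heart of the argument: the clean description of $\widehat{\mathcal{O}}_{G,e}$ as a power series ring $F[[t_1,\dots,t_m]]$ (requiring smoothness of $G$ at $e$ and the existence of a coefficient field, both guaranteed in characteristic $0$), and the verification that the resulting iterated Laurent series field carries an ordering. Both are standard, but they are where the real-closedness of $F$ and the smoothness of the group are genuinely used; everything else is formal. An attractive feature of this route is that it only needs one smooth rational point, and the group structure supplies it for free in the form of $e$.
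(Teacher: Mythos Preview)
Your argument is correct. The key step---embedding $F(G)$ into the iterated Laurent series field $F((t_1))\cdots((t_m))$ via completion at the smooth rational point $e$, and then ordering the latter lexicographically---is a clean and standard technique in real algebraic geometry. All the reductions (domain property from geometric connectedness and smoothness, equivalence of reality of $F[G]$ with formal reality of $F(G)$, injectivity of completion via Krull's intersection theorem) are handled properly.

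The paper takes a genuinely different route. It first proves the case $F=\mathbb{R}$ by transcendental means: given $\sum x_j^2=0$ in $\mathbb{R}[G]$, it pulls the $x_j$ back along the analytic exponential map $Lie(G)(\mathbb{R})\to G(\mathbb{R})$, observes that the resulting real analytic functions vanish, extends to $Lie(G)(\mathbb{C})$ by analytic continuation, and uses that the image of $\exp$ generates the connected group $G(\mathbb{C})$ to force each $x_j=0$. The general real closed $F$ is then reduced to $\mathbb{R}$ by a specialization argument: $G$ is defined over a subfield finitely generated over $\mathbb{Q}$, any such real field embeds in $\mathbb{R}$, and $F[G]$ is the union of such $k[G]$. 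Your approach is purely algebraic, works uniformly over any real closed base without passing through $\mathbb{R}$, and isolates the essential content---the existence of a single smooth $F$-point---rather than the full group structure. The paper's approach, by contrast, makes visible the analytic/Lie-theoretic picture and the role of connectedness via the exponential map, at the cost of the extra specialization step.
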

\begin{proof} We consider the case $F$ is equal to $\mathbb{R}$, the field of real numbers. Consider
$x_1,\dots ,x_n\in \mathbb{R}[G]$ with $\sum _{j=1}^nx_j^2=0$.  We regard $G(\mathbb{R})$ as a real analytic group. There is an exponential map $Lie(G)(\mathbb{R})\rightarrow G(\mathbb{R})$, where $Lie(G)$ is the Lie algebra of $G$.
Define the real analytic map $\tilde{x_j}:Lie(G)(\mathbb{R})\stackrel{exp}{\rightarrow} G(\mathbb{R})\stackrel{x_j}{\rightarrow} \mathbb{R}$. Now $\sum \tilde{x}_j^2=0$ and hence all $\tilde{x_j}=0$.
The complex analytic  morphism $X_j:Lie(G)(\mathbb{C})\stackrel{exp}{\rightarrow} G(\mathbb{C})\stackrel{x_j}{\rightarrow} \mathbb{C}$ is the complex extension of $\tilde{x_j}$. It is zero since it is zero on the subset
$Lie(G)(\mathbb{R})$ of $Lie(G)(\mathbb{C})$. The image of the complex exponential map generates the component of the identity of $G(\mathbb{C})$ and $x_j$ is zero on this set. By assumption $G_\mathbb{C}$ is connected and thus $x_j=0$ is zero for all $j$.  Hence $\mathbb{R}[G]$ is a real algebra. \\

For any real field $F$ which has an embedding in
$\mathbb{R}$, one has $F[G]\subset \mathbb{R}[G]$ and $F[G]$ is a
real algebra. Further, a real field $k$ which is finitely generated over $\mathbb{Q}$ has an embedding in $\mathbb{R}$ (\cite{Si} Proposition 3).

We consider the general case: $G$ is a linear algebraic group defined over a real closed field $F$. Now $G$ is defined over a subfield $F_0$
of $F$ which is finitely generated over $\mathbb{Q}$. Then $F[G]$ is the union of the subrings $k[G]$, where $k$ runs in the set of the subfields
of $F$ which are finitely generated over $\mathbb{Q}$ and contain $F_0$.
Thus $F[G]$ is a real algebra since every $k[G]$ is a real algebra.
\end{proof}

\noindent {\it Remark}.  The condition that $G_{F(i)}$ is connected (or equivalently $G$ is connected)
 is necessary. Indeed, consider the example of the group $\mu _3$ over $\mathbb{R}$ with coordinate algebra  $\mathbb{R}[X]/(X^3-1)$. This algebra is isomorphic to the direct sum
  $\mathbb{R}\oplus \mathbb{R}[X]/(X^2+X+1)$ and therefore is not real.

 \begin{corollary}[{\rm \cite{La} Corollary 6.8}] Let $G$ be a linear algebraic group over the real closed field $F$. Suppose that
 $G_{F(i)}$ is connected. Then the group $G(F)$ is Zariski dense in $G(F(i))$.
  \end{corollary}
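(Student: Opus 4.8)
The plan is to translate the geometric density statement into an algebraic one about the coordinate ring and then to feed in a real‑algebraic existence theorem. Since $F$ is real closed, $F(i)$ is algebraically closed, so the points of $G_{F(i)}$ are exactly the elements of $G(F(i))$ and ``$G(F)$ is Zariski dense in $G(F(i))$'' means that no nonzero regular function on $G_{F(i)}$ vanishes on the whole of $G(F)$. First I would record the two structural facts I need: because $G_{F(i)}$ is connected (hence integral, being a group variety in characteristic zero), the ring $F(i)[G]=F(i)\otimes_F F[G]$ and therefore its subring $F[G]$ is an integral domain; and by the Lemma above $F[G]$ is a \emph{real} algebra, so its field of fractions is formally real. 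These are the only places where the hypothesis on $G_{F(i)}$ enters.

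Next I would descend from $F(i)$ to $F$. Let $J=\{f\in F(i)[G]\mid f \text{ vanishes on } G(F)\}$ be the ideal cutting out the closure of $G(F)$; the goal is $J=0$. Using the decomposition $F(i)[G]=F[G]\oplus i\,F[G]$ of $F$‑modules, write $f=g+ih$ with $g,h\in F[G]$. For every $p\in G(F)$ the values $g(p),h(p)$ lie in $F$ and satisfy $g(p)+i\,h(p)=0$; since $i\notin F$ this forces $g(p)=h(p)=0$. Hence the whole question reduces to a single claim about the real domain $F[G]$: a nonzero element $g\in F[G]$ cannot vanish on all of $G(F)$.

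To settle that claim I would argue by localization together with the Artin--Lang homomorphism theorem (the real analogue of the Nullstellensatz, which is the substance of the cited \cite{La} Corollary 6.8). Given $g\neq 0$, the localization $F[G][g^{-1}]$ is again a finitely generated $F$‑algebra, and it is a domain with the same formally real field of fractions as $F[G]$, hence a real algebra. Artin--Lang over the real closed field $F$ then yields an $F$‑algebra homomorphism $F[G][g^{-1}]\to F$, i.e.\ a point $p\in G(F)$ at which $g$ is invertible, so $g(p)\neq 0$; this contradicts $g|_{G(F)}=0$. Thus $g=h=0$, whence $J=0$ and $G(F)$ is dense in $G(F(i))$. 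The main obstacle is precisely this last, genuinely nonelementary input: guaranteeing the existence of an $F$‑rational point on a real affine domain over a real closed field. Everything else is the routine descent through $F(i)=F\oplus Fi$ and the standard localization device that upgrades existence of points to Zariski density.
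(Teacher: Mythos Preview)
Your argument is correct and is exactly the approach the paper has in mind: the Corollary is stated without proof, as an immediate consequence of Lemma~2.2 (which gives that $F[G]$ is a real domain) together with the Artin--Lang homomorphism theorem from \cite{La}; you have simply spelled out the standard descent $f=g+ih$ and the localization device that turns Artin--Lang into Zariski density. There is nothing to add or correct.
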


 \begin{theorem}[{\bf 1.5}]  Suppose that $K$ is real closed. Let $L$ be a Picard--Vessiot field for a differential module
 $M/K$.  Then $L$ is a real field if and only if the torsor $\underline{Isom}^\otimes_K(K\otimes \omega_L, \rho)$ is
 trivial. \end{theorem}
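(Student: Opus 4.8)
The plan is to transport the entire question to the affine $K$-variety $P:=\underline{Isom}^\otimes_K(K\otimes\omega_L,\rho)$ and then combine two inputs: the elementary dictionary between torsors and rational points, and the Artin--Lang homomorphism theorem over the real closed field $K$.

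First I would set up that dictionary. By the construction recalled from \cite{De}, $O(P)$ is a differential domain whose field of fractions is isomorphic to $L$; since a domain is a real ring precisely when its fraction field is a real field, it follows that $L$ is real if and only if $O(P)$ is a real algebra. Next, $P$ is a torsor under $G_K$ with $G=\underline{Aut}^\otimes_k(\omega_L)$ (Proposition 1.4, applied with $\omega=\omega_L$, $F=K$, $\eta=\rho$), and a torsor is trivial exactly when it carries a $K$-rational point: a point $p\in P(K)$ furnishes the trivialising isomorphism $G_K\to P,\ g\mapsto g\cdot p$, and conversely. Thus the theorem is reduced to the single assertion that $O(P)$ is real if and only if $P(K)\neq\emptyset$.

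For the implication ``$L$ real $\Rightarrow$ torsor trivial'' I would invoke real algebraic geometry. Here $O(P)$ is a finitely generated domain over the real closed field $K$ whose fraction field $L$ is real. The Artin--Lang homomorphism theorem then produces a $K$-algebra homomorphism $O(P)\to K$, that is, a point of $P(K)$, so $P$ is trivial. I expect this to be the decisive step: passing from the purely formal reality of the coordinate ring to an honest $K$-rational (``real'') point of $P$ is exactly what Artin--Lang theory over a real closed field supplies, and nothing softer seems to do the job.

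For the converse ``torsor trivial $\Rightarrow L$ real'', suppose $P(K)\neq\emptyset$. Then $O(P)\cong K[G_K]=K\otimes_k k[G]$ as $K$-algebras. Since $O(P)$ is a domain, so is $K[G_K]$, whence $G_K$ is connected; carrying the identity as a rational point, it is then geometrically connected, so $G_{K(i)}$ is connected. Applying Lemma 2.2 to the $K$-group $G_K$ shows that $K[G_K]$ is a real algebra, hence $O(P)$ is real and $L$ is real. A byproduct worth recording is that the differential Galois group of a real Picard--Vessiot extension over a real closed $K$ is forced to be connected, which is consistent with the $\mu_2$ phenomenon in Remarks 1.2.
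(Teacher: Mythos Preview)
Your proposal is correct and follows essentially the same line as the paper's own proof: real Nullstellensatz / Artin--Lang for the forward direction, and Lemma~2.2 applied to $K[G_K]$ for the converse. The only noteworthy variation is how you obtain the connectedness hypothesis needed for Lemma~2.2: the paper observes directly that $G_{k(i)}$ is the differential Galois group of $K(i)\otimes M$ over the algebraically closed field $K(i)$ and is therefore connected (a fact that holds independently of whether the torsor is trivial), whereas you deduce connectedness of $G_K$ only under the triviality assumption, from $O(P)\cong K[G_K]$ being a domain, and then pass to geometric connectedness via the identity point. Both routes are valid and standard; the paper's gives slightly more information (connectedness holds unconditionally), while yours stays entirely within scheme-theoretic reasoning.
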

 \begin{proof} $G:=\underline{Aut}^\otimes _k(\omega _L)$  coincides with the group of the
 $K$-linear differential automorphisms of $L$. Let $R$ denote the coordinate ring of the torsor
  $\underline{Isom}^\otimes_K(K\otimes \omega_L, \rho)$. Then $L$ is the field of fractions of $R$.\\

If $L$ is a real Picard--Vessiot field, then $R\subset L$ is a finitely generated real $K$-algebra. From the real
Nullstellensatz and the assumption that $K$ is real closed it follows that there exists a $K$-linear homomorphism
$\phi :R\rightarrow K$ with $\phi (1)=1$. The torsor $Spec(R)$ has a $K$-valued point and is therefore trivial.  \\

We observe that $L(i)$ is a Picard--Vessiot field for the differential module $K(i)\otimes M$ over $K(i)$. Further $G_{k(i)}$ is the group of the $K(i)$-linear differential automorphisms of $L(i)$ and is the `usual' differential Galois group of $K(i)\otimes M$ over $K(i)$. This group is connected since $K(i)$ is algebraically closed. \\

Suppose that the torsor $Spec(R)$ is trivial. Then $R\cong K\otimes _kk[G]\cong K[G]$. According to
 Lemma 2.2,  $K[G]$ is a real $K$-algebra and therefore its field of fractions  $L$ is a real field.  \end{proof}

\subsection{The final step}
By Lemma 2.1, we may suppose that $K$ is real closed. Let $L_1,L_2$ denote two real Picard-Vessiot fields for a differential module $M/K$.

 Write $\omega _j=\omega _{L_j}:<M>_\otimes \rightarrow vect(k)$ for the corresponding
fibre functors. Let $G=\underline{Aut}^\otimes_k(\omega _1)$. Then $\underline{Isom}^\otimes _k(\omega _1,\omega _2)$ is a $G$-torsor over $k$ corresponding to an element $\xi \in H^1(\{1,\sigma \},G(k(i)))$, where $\{1,\sigma\}$
is $Gal(k(i)/k)$, represented by a 1-cocycle $c$ with $c(1)=1,\ c(\sigma )\in G(k(i))$ and $c(\sigma)\cdot \ ^\sigma c(\sigma)=1$.

The $G_K$-torsor  $\underline{Isom}^\otimes _K(K\otimes \omega
_1,K\otimes \omega _2)$ corresponds to an
 element\linebreak
 $\eta \in H^1(\{1,\sigma\},G(K(i)))$. This element is the image of $\xi$ under the map, induced by the inclusion
 $G(k(i))\subset G(K(i))$, from  $H^1(\{1,\sigma \},G(k(i)))$ to $H^1(\{1,\sigma\},G(K(i)))$ (we note that
 $Gal(K(i)/K)=Gal(k(i)/k)$).   Since $L_j$ is real, the torsor
$\underline{Isom}^\otimes _K(K\otimes \omega _j,\rho )$ is trivial for $j=1,2$, by Theorem 1.5. Thus there exists
 isomorphisms $\alpha _j :K\otimes \omega _j\rightarrow \rho$ for $j=1,2$. The isomorphism
 $\alpha _2^{-1}\circ \alpha _1:K\otimes \omega _1\rightarrow K\otimes \omega _2$ implies that $\eta$ is trivial.
 In particular, there is an element $h\in G(K(i)))$ such that $c(\sigma )=h^{-1}\sigma (h)$.

  There exists a finitely generated $k$-algebra $B\subset K$ with $h\in G(B(i))$. Since $B$ is real and $k$ is real closed,
  there exists, by the real Nullstellensatz, a $k$-linear homomorphism $\phi :B\rightarrow k$ with $\phi (1)=1$.
  Applying $\phi$ to the identity $c(\sigma)=h^{-1}\sigma(h)$ one obtains $c(\sigma )=\phi (h)^{-1}\sigma (\phi (h))$.
  Thus $c$ is a trivial 1-cocycle and there is an isomorphism $\omega _1\rightarrow \omega _2$. Hence
  $L_1$ and $L_2$ are isomorphic as differential field extensions of $K$.  \\
  {\it Remark}. The natural map
  $H^1(\{1,\sigma\}, G(k(i)))\rightarrow H^1(\{1,\sigma\}, G(K(i)))$ is injective, by the above argument.   \hfill $\square$

\section{Comments and Examples}
        $\ $ \\
\indent  The proof of the unicity of a real Picard--Vessiot field uses almost exclusively properties of Tannakian
categories. This implies that the proof remains valid for other types of equations, such as:\\
(a). linear  partial  differential equations, like $\frac{\partial}{\partial z_j}Y=A_jY$ for $j=1,\dots ,n$,\\
(b). linear ordinary difference equations, like $Y(z+1)=AY(z)$, \\
(c). linear $q$-difference equations with $q\in \mathbb{R}^*$, like $Y(qz)=AY(z)$.\\

\noindent For  case  (a), the existence of a real
Picard-Vessiot field has been proved in \cite{CH}.
The proof of the uniqueness result (Theorem 1.1) for real
differential fields with real closed field of constants can
probably be rephrased  for the case of differential modules over a formally p-adic differential field with a p-adically closed field of constants of the
same rank.\\

\begin{observations} $\ $ \\ {\rm
Let $K$ be a real closed differential field with field of constants $k$,  $M/K$ a differential module and $\omega :<M>_\otimes \rightarrow vect(k)$ a fibre functor. Let $L$ be the Picard--Vessiot field
corresponding to $\omega$ and $G$ the group of the differential automorphisms of $L/K$. Let
$H$ be the differential Galois group of $K(i)\otimes M$ over $K(i)$. We recall that $G$ is a form of $H$ over the field $k(i)$. Using the
identification $k(i)\times _kG=H$, one obtains on $H$ and on $Aut(H)$ a structure of algebraic group over $k$. Let $\{1,\sigma\}$ be the Galois group of $k(i)/k$. Then $H^1(\{1,\sigma \}, Aut(H))$ has a natural bijection to the set of forms of $H$ over $k$. Although the action of
$\sigma$ on $Aut(H)$ depends on $G$, this set does not depend on the choice of $G$.

Let $\eta: <M>_\otimes \rightarrow vect(k)$ be another fibre functor. Then $\eta$ is mapped, according to Proposition 1.4, to an element in $\xi (\eta) \in H^1(\{1,\sigma \}, G(k(i)))$ (and this induces a bijection between $\eta$'s and elements in this cohomology set). A 1-cocycle $c$ for the group $\{1,\sigma\}$ has the form $c(1)=1,\ c(\sigma )=a$ and
$a$ should satisfy $a\cdot \sigma (a)=1$ (and is thus determined by $a$).

 A 1-cocycle for
$\xi (\eta)$ can be made as follows. The fibre functor $\eta$ corresponds
to  a Picard--Vessiot field $L_\eta$. Both $L(i)$ and $L_\eta (i)$ are
Picard--Vessiot fields for $K(i)\otimes M$ over $K(i)$. Thus there
exists a $K(i)$-linear differential isomorphism $\phi :L(i)\rightarrow
L_\eta (i)$. On the field $L(i)$ we write $\tau$ for the conjugation given
by $\tau(i)=-i$ and $\tau$ is the identity on $L$. The similar conjugation
on $L_\eta(i)$ is denoted by $\tau_\eta$.
 Now $\tau_\eta \circ \phi\circ \tau :L(i)\rightarrow L_\eta (i)$ is another
$K(i)$-linear differential isomorphism. A 1-cocycle $c$ for $\xi (\eta)$
is now  $c(\sigma )=\phi ^{-1} \circ \tau_\eta \circ \phi\circ \tau $.\\

Let $G_\eta$ denote the group of the $K$-linear differential automorphism of $L_\eta$. The group $G_\eta$ is a form of $G$ and
produces an element in $H^1(\{1,\sigma \},Aut(H))$ with $H=k(i)\times G$. We want to compute a 1-cocycle $C$ for this element.
Define the isomorphism $\psi: k(i)\times G\rightarrow k(i)\times G_\eta$
of algebraic groups over $k(i)$, by $\psi (g)=\phi \circ g\circ \phi ^{-1}$.
Define $\tau_G$, the `conjugation' on $k(i)\times G$, by the formula
$\tau_G(g)=\tau \circ g\circ \tau$ for the elements $g\in G(k(i))$.
Let $\tau_{G_\eta}$ be the similar conjugation on $k(i)\times G_\eta$.   Now $\tau_{G_{\eta}}\circ \psi \circ \tau_G:k(i)\times G\rightarrow
k(i)\times G_\eta$  is another isomorphism between the algebraic groups over $k(i)$. The 1-cocycle $C$ is given by
 $C(\sigma )=\psi ^{-1}\circ \tau_{G_{\eta}}\circ \psi \circ \tau_G$.
One observes that $C(\sigma )(g)=c(\sigma )gc(\sigma )^{-1}$.

The map, which associates to
$h\in G(k(i))$, the automorphism $g\mapsto hgh^{-1}$ of $G$, induces a map $H^1(\{1,\sigma\},G(k(i)))\rightarrow H^1(\{1,\sigma\},G/Z(G)(k(i)))
\rightarrow H^1(\{1,\sigma \}, Aut(H))$, denoted by
$\xi(\eta ) \mapsto \tilde{\xi}(\eta)$.
The forms corresponding to elements in the image of $H^1(\{1,\sigma\},G/Z(G)(k(i))) \rightarrow H^1(\{1,\sigma \}, Aut(H))$ are called `inner forms of $G$'.
By \S1,  $\eta$ induces a Picard--Vessiot field and a form $G(\eta )$ of $H$.  Above we have verified (see  \cite{B} for a similar computation) that $G(\eta )$ is the inner form of $G$ corresponding to the element $\tilde{\xi}(\eta)$.
For the  delicate theory of forms we refer to the informal manuscript
\cite{B} and the standard text \cite{Sp}. }\hfill $\square$
\end{observations}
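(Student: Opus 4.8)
The plan is to prove that $G(\eta)$ is the inner form of $G$ corresponding to $\tilde\xi(\eta)$ by computing the two relevant one-cocycles directly from the differential-field data and then matching them. First I fix, as in the preceding paragraphs, a $K(i)$-linear differential isomorphism $\phi:L(i)\to L_\eta(i)$ together with the conjugations $\tau$ on $L(i)$ and $\tau_\eta$ on $L_\eta(i)$; such a $\phi$ exists because both $L(i)$ and $L_\eta(i)$ are Picard--Vessiot fields for $K(i)\otimes M$ over $K(i)$. I then take $c(\sigma)=\phi^{-1}\circ\tau_\eta\circ\phi\circ\tau$ as the representative of $\xi(\eta)\in H^1(\{1,\sigma\},G(k(i)))$, after checking the two things this requires. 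First, $c(\sigma)$ is a $K(i)$-linear differential automorphism of $L(i)$, so it lies in $G(k(i))$ under the identification $k(i)\times G=H$: it is a composite of two $K(i)$-linear maps ($\phi,\phi^{-1}$) and two conjugate-linear maps ($\tau,\tau_\eta$), hence $K(i)$-linear. Second, with the $\sigma$-action $\sigma(g)=\tau\circ g\circ\tau$ the cocycle condition $c(\sigma)\cdot\sigma(c(\sigma))=1$ becomes $c(\sigma)\circ\tau\circ c(\sigma)\circ\tau=\mathrm{id}$, which collapses to the identity upon using $\tau^2=\tau_\eta^2=\mathrm{id}$.

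Next I pass to the group level. With $\psi:k(i)\times G\to k(i)\times G_\eta$ given by $\psi(g)=\phi\circ g\circ\phi^{-1}$, and the conjugations $\tau_G(g)=\tau\circ g\circ\tau$ and $\tau_{G_\eta}$ defined analogously, the form $G_\eta=G(\eta)$ of $G$ is represented by $C(\sigma)=\psi^{-1}\circ\tau_{G_\eta}\circ\psi\circ\tau_G\in Aut(H)$. The crux of the whole argument is the single identity $C(\sigma)(g)=c(\sigma)\,g\,c(\sigma)^{-1}$. I would establish it by direct composition: unwinding the four factors gives $C(\sigma)(g)=\phi^{-1}\circ\tau_\eta\circ\phi\circ\tau\circ g\circ\tau\circ\phi^{-1}\circ\tau_\eta\circ\phi$, and recognizing the leftmost block as $c(\sigma)$ and the rightmost block as $c(\sigma)^{-1}=\tau\circ\phi^{-1}\circ\tau_\eta\circ\phi$ (once more using that $\tau,\tau_\eta$ are involutions) yields the claim.

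From here the conclusion is formal. The identity $C(\sigma)(g)=c(\sigma)\,g\,c(\sigma)^{-1}$ says exactly that $C$ is the image of $c$ under the map $G(k(i))\to Aut(H)$, $h\mapsto(g\mapsto hgh^{-1})$. Passing to cohomology, the class of $C$ is the image $\tilde\xi(\eta)$ of $\xi(\eta)$ under $H^1(\{1,\sigma\},G(k(i)))\to H^1(\{1,\sigma\},Aut(H))$; and since this map factors through $H^1(\{1,\sigma\},G/Z(G)(k(i)))$, the form $G(\eta)$ that $C$ classifies is by definition the inner form of $G$ attached to $\tilde\xi(\eta)$, which is the assertion.

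I expect the only genuine difficulty to be bookkeeping rather than mathematics: keeping the compositions of $\phi,\tau,\tau_\eta$ (and of their group-level partners $\psi,\tau_G,\tau_{G_\eta}$) in the correct order, confirming at each step that the intermediate arrow is of the right kind --- a $K(i)$-linear differential map in the field computation, an isomorphism of algebraic groups over $k(i)$ in the group computation --- and checking that the identifications $k(i)\times G=H=k(i)\times G_\eta$ respect the $\sigma$-actions. Once the composition identity $C(\sigma)(g)=c(\sigma)gc(\sigma)^{-1}$ is in hand, no further input (neither the analytic argument of Lemma 2.2 nor the real Nullstellensatz) is required.
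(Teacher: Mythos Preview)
Your proposal is correct and follows exactly the line of argument sketched in the paper itself: the Observations already set up $c(\sigma)$ and $C(\sigma)$ and simply assert ``one observes that $C(\sigma)(g)=c(\sigma)gc(\sigma)^{-1}$'', while you supply the unwinding of the compositions that justifies this identity. There is no alternative route here, only the bookkeeping you describe, and you have carried it out correctly.
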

\noindent {\bf Examples}.\\
 {\it We continue with the notation and assumptions  of Observations} (3.1). \\ (1). Let $M/K, \omega, L,G$ be such that $G={\rm SL}_{n,k}$. Since
$H^1(\{1,\sigma \}, {\rm SL}_{n}(k(i))$ is trivial, $L$ is the unique Picard--Vessiot field and is a real field (because a real Picard--Vessiot field exists).

The group ${\rm SL}_n$ has non trivial forms. For instance,
${\rm SU}(2)$ is an inner form of ${\rm SL}_{2,\mathbb{R}}$. There are
examples, according to Proposition 3.2 below, of differential modules $M/K$
having a real Picard--Vessiot field $L$ with group of differential automorphisms of $L/K$ equal to ${\rm SU}(2)$.

From \cite{Sp}, 12.3.7 and 12.3.9 one concludes that
$H^1(\{1,\sigma \},{\rm SU}(2)(\mathbb{C}))$ is trivial.  Again $L$ is the
only Picard--Vessiot field.\\
(2). If $G$ is the symplectic group ${\rm Sp}_{2n,k}$, then there are no forms and $H^1(\{1,\sigma \},G(k(i)))$ is trivial. Therefore there is only one Picard--Vessiot field $L$ and this is a real field.  \\
(3). Consider a $k$-form $G$ of ${\rm SO}(n)_{k}$ with odd $n\geq 3$. The center $Z$ of $G$ consists of the scalar matrices of order $n$, thus $Z$ is the group $\mu_{n,k}$ of the $n$th roots of unity. Since $n$ is odd, one has $Z(k)=\{1\}$. Further, again since $n$ is odd, the automorphisms of $H=G_{k(i)}$ are interior and $Aut(H)(k(i))=G/Z(k(i))$. We claim the following. \\

\noindent
{\it The natural map $H^1(\{1,\sigma \},G(k(i)) )\rightarrow
H^1(\{1,\sigma \},G/Z(k(i)) )$ is a bijection}.\\

\noindent
{\it Proof}. A 1-cocycle $c$ for $G/Z(k(i)) $ is given by $c(1)=1$ and
$c(\sigma )=a\in G/Z(k(i)) $ with $a\sigma (a)=1$. Choose an
$A\in G(k(i))$ which maps to $a$. Thus $A\sigma (A)\in Z(k(i))$ and
$A$ commutes with $\sigma A$. Further $\sigma (A\sigma (A))=
\sigma (A)A=A\sigma (A)$ and thus $A\sigma (A)\in \mu _n(k)=\{1\}$.
Therefore $C$ defined by $C(1)=1,\ C(\sigma )=A$ is a 1-cocycle for
$G(k(i))$ and maps to $c$. Hence the map is surjective.

Consider for $j=1,2$ the 1-cocycle $C_j$ for $G$ given by
$C_j(\sigma )=A_j$. Suppose that the images of $C_j$ as 1-cocycles for $G/Z(k(i))$ are equivalent. Then there exists  $B\in G(k(i))$ such that $B^{-1}A_1\sigma (B)=xA_2$ for some
element $x\in Z(k(i))$. We may replace $B$ by $yB$ with $y\in Z(k(i))$.
Then $x$ is changed into $xy^{-1}\sigma (y)$. And the latter is equal
to 1 for a suitable $y$. This proves the injectivity of the map.\hfill $\square$\\

We conclude from the above result  that there exists a (unique up to isomorphism) fibre functor $\eta :<M>_\otimes \rightarrow vect(k)$
(or, equivalently, a Picard--Vessiot field) for every form of $H=SO(n)_{k(i)}$ over $k$. Moreover, only one of these
fibre functors  corresponds to a {\it real} Picard--Vessiot field.

\bigskip

Let $\omega : <M>_\otimes \rightarrow vect(k)$ denote the fibre functor
corresponding to a {\it real} Picard--Vessiot field $L_\omega$ and
$G_\omega$ the group of the differential automorphisms of $L_\omega/K$.
{\it We want to  identify this form $G_\omega$ of
$H:={\rm SO}(n)_{k(i)}$}.\\

Since the differential Galois group of $K(i)\otimes M$ is ${\rm SO}(n)_{k(i)}$, there exists an element $F\in sym^2(K(i)\otimes M^*)$ with
$\partial F=0$. Further $F$ is unique up to multiplication by a scalar and
$F$ is a non degenerate bilinear symmetric form. The non trivial automorphism $\sigma$ of $K(i)/K$ and of $k(i)/k$ acts in an obvious way  on $K(i)\otimes M$ and  on constructions by linear algebra
of  $K(i)\otimes M$. Now $\sigma (F)$ has the same properties as $F$ and
thus $\sigma (F)=cF$ for some $c\in K(i)$. After changing $F$ into
$aF$ for a suitable $a\in K(i)$, we may suppose that $\sigma (F)=F$.
Then $F$ belongs to $sym^2(M^*)$ and is a non degenerate form of degree $n$ over the field $K$. Further $F$ is determined by its signature because $K$ is real closed. Moreover $KF$ is the unique
1-dimensional submodule of $sym^2(M^*)$. We claim the following:\\
  {\it
$G_\omega$ is the special orthogonal group over $k$ corresponding to
a form $f$ over $k$ which has the same signature as $F$}. \\

Let $V=\omega (M)$. The group
$G_\omega$ is the special
orthogonal group of some non degenerate bilinear symmetric form
$f\in sym^2(V^*)$. Since $L_\omega$ is real, there exists a  isomorphism
$m:K\otimes _k\omega \rightarrow \rho$ of functors. Applying $m$
to the modules $M$ and  $sym^2(M^*)$ one finds an isomorphism
$m_1:K\otimes _kV\rightarrow M$ of $K$-vector spaces
which induces an isomorphism of $K$-vector spaces
$m_2:K\otimes_k sym^2(V^*)\rightarrow sym^2(M^*)$.
The latter maps the subobject $K\otimes kf$ to $KF$ by the uniqueness of
$KF$.   One concludes that the forms $f$ and $F$ have the same signature.

\begin{proposition} Suppose that $K$ is real closed.
 Given is a connected semi-simple group $H$ over $k(i)$ and a form $G$ of $H$ over $k$. Then there exists a differential module $M$ over $K$ and a real Picard--Vessiot field for $M/K$ such that the group of the differential automorphisms of $L/K$ is $G$.\end{proposition}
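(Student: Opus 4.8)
The plan is to realize $G$ \emph{directly} as the automorphism group attached to the trivial $G$-torsor, equipped with a suitable differential structure, and to read off reality from Lemma 2.2. Fix a faithful representation $G\hookrightarrow {\rm GL}_{n,k}$ defined over $k$; base change gives $H=G_{k(i)}\hookrightarrow {\rm GL}_{n,k(i)}$, and $Lie(G)\subset \mathfrak{gl}_{n,k}$ with $Lie(H)=Lie(G)\otimes_kk(i)$. For $A\in Lie(G)(K):=Lie(G)\otimes_kK$ I would put on the coordinate ring $K[G]=K\otimes_kk[G]$ of the trivial torsor the derivation $D_A$ extending that of $K$ and determined by $D_A\mathcal{F}=A\mathcal{F}$, where $\mathcal{F}\in {\rm GL}_n(K[G])$ is the universal point of $G$ (well defined precisely because $A\in Lie(G)$). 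Then $\mathcal{F}$ is a fundamental matrix for the module $M=(K^n,\partial -A)$, and $K[G]$ is generated over $K$ by the entries of $\mathcal{F}$ and $\det(\mathcal{F})^{-1}$. If $(K[G],D_A)$ is differentially simple with field of constants $k$, it is a Picard--Vessiot ring for $M/K$ whose associated torsor $\underline{Isom}^\otimes_K(K\otimes\omega,\rho)$ is the \emph{trivial} $G$-torsor; hence the group of differential automorphisms of its field of fractions is $G$, and, since $G_{k(i)}=H$ is connected, Lemma 2.2 shows that $K[G]$ is a real algebra, so that field of fractions is a real Picard--Vessiot field with group $G$. This would finish the proof.

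Everything thus reduces to producing $A\in Lie(G)(K)$ for which $(K[G],D_A)$ is differentially simple with constants $k$. I would verify this after base change to $K(i)$: one has $(K[G],D_A)\otimes_KK(i)=(K(i)[H],D_A)$. Since $K(i)$ is faithfully flat over $K$, any nonzero differential ideal of $K[G]$ extends to one of $K(i)[H]$, so differential simplicity descends from $K(i)$ to $K$; and the field of constants of $(K[G],D_A)$ is $K[G]\cap k(i)=k$, because $H$ connected makes $K(i)[H]=K[G]\oplus iK[G]$ a domain. Hence it suffices to find $A\in Lie(G)(K)$, viewed inside $Lie(H)(K(i))=Lie(G)\otimes_kK(i)$, such that the trivial $H$-torsor $K(i)[H]$ with derivation $D_A$ is a Picard--Vessiot ring over $K(i)$ with group $H$. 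In other words, the real problem over $K$ has been reduced to the ordinary inverse problem for the connected group $H$ over the algebraically closed constant field $k(i)$, subject to the single extra constraint that the connection form $A$ be \emph{real}, i.e.\ lie in the $\sigma$-invariant subspace $Lie(G)(K)$ of $Lie(H)(K(i))$.

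This last point is where the work lies. Since $k\neq K$, choose $z\in K\setminus k$ with $z'\neq 0$; then $k(z)\subset K$ and $k(i)(z)\subset K(i)$ share the algebraically closed field of constants $k(i)$, so that the differential Galois group over $K(i)$ of an equation defined over $k(i)(z)$ is already computed over $k(i)(z)$. I would invoke the known solution of the inverse problem for connected groups over $k(i)(z)$, realizing $H$ by a Fuchsian connection $A=\sum_jA_j/(z-z_j)$, and exploit the freedom in that construction to take the poles $z_j$ in $k$ and the residues $A_j$ in the real Lie algebra $Lie(G)$, which spans $Lie(H)$ over $k(i)$. For such a choice, $A\in Lie(G)(k(z))\subset Lie(G)(K)$ is real while over $K(i)$ the Galois group is all of $H$, and the reductions above then yield a real Picard--Vessiot field for $M=(K^n,\partial -A)$ with group $G$. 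The main obstacle is exactly this simultaneous demand: guaranteeing that a connection defined over $K$ and valued in the real form $Lie(G)$ still has the \emph{full} connected group $H$ as its Galois group over $K(i)$. Lie-algebra generation of $Lie(H)$ by the real residues is necessary but not sufficient for a connected group, so the genuine input is the realization theorem itself, which must be run while keeping track of the real structure.
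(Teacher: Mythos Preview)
Your global architecture coincides with the paper's: choose $A\in Lie(G)(K)$, equip $K[G]$ with the derivation $D_A$ so that the universal point is a fundamental matrix for $y'=Ay$, and argue that if the differential Galois group of $y'=Ay$ over $K(i)$ is all of $H=G_{k(i)}$ then $K[G]$ is a Picard--Vessiot ring over $K$ whose field of fractions is real because the torsor is trivial. Your descent from $K(i)$ to $K$ (simplicity and constants) is essentially the paper's computation that $IR_0$ is a maximal differential ideal of $R_0$ once $IR_0[i]$ is one of $R_0[i]$.

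The divergence, and the gap you yourself flag, is the realization step. You propose a Fuchsian connection $A=\sum A_j/(z-z_j)$ with $z_j\in k$ and $A_j\in Lie(G)(k)$ and appeal to the inverse problem over $k(i)(z)$, but you do not verify that the known constructions survive the constraint $A_j\in Lie(G)$ rather than $Lie(H)$. The paper takes a different route that dispenses with this difficulty by exploiting the \emph{semi-simplicity} hypothesis: $H$ admits a Chevalley module, and the argument of \cite{PS}, \S 11.7 then shows that a \emph{general} $A\in Lie(G)(K)$ already yields full Galois group $H$ over $K(i)$. Since $Lie(G)(K)$ is Zariski dense in $Lie(H)(K(i))$, the genericity condition from \cite{PS} can be met with a real $A$; this is what ``the details remain valid in the present situation'' means. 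Your Fuchsian approach is not wrong in spirit---the paper itself remarks after the proof that imitating \cite{MS} should give the result under the weaker hypothesis that $H$ is merely connected---but as written your proposal leaves precisely the step that the semi-simple/Chevalley argument is designed to handle.
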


 \begin{proof} Let $G$ be given as a subgroup of some ${\rm GL}_{n,k}$, defined by
 a radical ideal $I$. Then $k[G]=k[\{X_{k,l}\}_{k,l=1}^n,\frac{1}{\det }]/I$. The tangent space of
 $G$ at $1\in G$ can be identified with the $k$-linear derivations $D$ of this algebra, commuting with the action of $G$.  These derivations $D$ have the form $(DX_{k,l})=B\cdot (X_{k,l})$ for some
matrix $B\in Lie(G)(k)$ (where $Lie(G)\subset {\rm Matr}(n,k)$ is the Lie algebra of $G$).

The same holds for $K[G]=K\otimes _k k[\{X_{k,l}\}_{k,l=1}^n,\frac{1}{\det }]/I$. Any
$K$-linear derivation $D$ on the algebra, commuting with the action of $G$, has the form
$(DX_{k,l})=A\cdot (X_{k,l})$ with $A\in Lie(G)(K)$. We choose $A$ as general as possible.

The differential module $M/K$ is defined by the matrix equation
$y'=Ay$. It follows from \cite{PS}, Proposition 1.3.1 that the differential Galois group of $K(i)\otimes M$ is contained in $H=G_{k(i)}$.  Now one has to choose $A$ such that the differential Galois group (which is connected because $K(i)$ is algebraically closed) is not a proper subgroup of $H$. Since $H$ is semi-simple, there exists a
Chevalley module for $H$. Using this Chevalley module one can produce
a general choice of $A$ such that differential Galois
group of $y'=Ay$ over $K(i)$ is in fact $G_{k(i)}$ (compare \cite{PS}, \S 11.7 for the details which remain valid in the present situation).

The usual way to produce a Picard--Vessiot ring for the
equation  $y'=Ay$ is to consider the differential algebra
$R_0:=K[\{X_{k,l}\}_{k,l=1}^n,\frac{1}{\det }]$, with
differentiation defined by $(X'_{k,l})=A\cdot (X_{k,l})$, and to
produce a maximal differential ideal in  $R_0$.  Since
$A\in Lie(G)(K)\subset Lie(H)(K(i))$, the ideal $J\subset
R_0[i]$, generated by $I$ is a differential ideal.  It is in fact a maximal
differential ideal of $R_0[i]$, since the differential Galois group is precisely $H$.
Then $J\cap R_0=IR_0$ is a maximal differential ideal of $R_0$ and
$K[G]=R=R_0/IR_0$  is a Picard--Vessiot ring for $M$ over $K$.
The field of fractions $L$ of $R$ is real because the $G$-torsor
$Spec(K[G])$ is trivial.   \end{proof}

It seems that, imitating the proofs in \cite{MS}, one can show that
Proposition 3.2  remains valid under the weaker conditions: $K$ is a real differential field and a $C_1$-field and $H$ is connected.

\vspace{1cm}
\footnotesize

\begin{tabular}{lcl}
Teresa Crespo && Zbigniew Hajto  \\
Departament d'\`{A}lgebra i Geometria && Faculty of Mathematics and Computer Science \\  Universitat de Barcelona && Jagiellonian University \\ Gran Via de les Corts Catalanes 585 && ul. Prof. S. \L ojasiewicza 6 \\
08007 Barcelona, Spain && 30-348 Krak\'ow, Poland  \\
teresa.crespo@ub.edu && zbigniew.hajto@uj.edu.pl
\end{tabular}

\vspace{1cm}
\begin{tabular}{l}
Marius van der Put \\
Department of Mathematics \\
University of Groningen\\
P.O. Box 800 \\
9700 AV Groningen, The Netherlands \\
M.van.der.Put@math.rug.nl
\end{tabular}
\end{document}